\documentclass[article,noinfoline]{imsart}
\usepackage{graphicx}
\usepackage{jr,comment}



\usepackage{color}
\newtheorem{Assumption}{Assumption}
\startlocaldefs

\def\en{\infty}
\def\summ#1#2#3{\sum_{#1=#2}^{#3}}

\newcommand{\bdf}{\boldsymbol}
\endlocaldefs

\begin{document}

\begin{frontmatter}

\title{ Empirical Bayes improvement of Kalman filter type of estimators}

\author{\fnms{Eitan} \snm{Greenshtein}\ead[label=e2]{eitan.greenshtein@gmail.com}}
\address{Israel Census Bureau of Statistics; \printead{e2}}
\affiliation{Israel Census Bureau of Statistics}

\author{\fnms{Ariel} \snm{Mansura,}\ead[label=e1]{ariel.mansura@boi.org.il}}
\address{Bank of Israel; \printead{e1}}
\affiliation{Bank of Israel}

\author{\fnms{Ya'acov} \snm{Ritov}
\ead[label=e3]{yaacov.ritov@gmail.com}}
\address{The Hebrew University of Jerusalem; \printead{e3}}
\affiliation{The Hebrew University of Jerusalem}

\runauthor{ Mansura, Greenshtein, Ritov}

\maketitle

\begin{abstract}
We consider the problem of estimating the means $\mu_i$ of
$n$ random variables
$Y_i \sim N(\mu_i,1)$, $i=1,\ldots ,n$. Assuming some structure on the $\mu$ process, e.g., a state space model,
one may use a summary statistics for the contribution of the rest of the observations  to the  estimation of $\mu_i$. The most important example for this is the Kalman filter.
We introduce  a non-linear improvement of  the standard weighted average of the given summary statistics and $Y_i$ itself, using empirical  Bayes methods.
The improvement is obtained under mild assumptions. It is strict  when the process that governs the states $\mu_1,\ldots,\mu_n $  is not a  linear  Gaussian state-space  model.
We consider both the sequential and the retrospective estimation problems.
\end{abstract}

\end{frontmatter}

\section{Introduction and Preliminaries \label{sec:int} }

We consider the estimation under squared error loss of a vector $\mu_1,\dots,\mu_n$ observed with Gaussian additive error:  $Y_i=\mu_i+\eps_i$, $i=1,\dots,n$, where  $\eps_1,\dots,\eps_n$ are \iid\  $N(0,1)$. It is natural in our applications to consider the index $i$ as denoting time, and regard $\mu_1,\dots,\mu_n$ as a realization of a stochastic process. We  analyze, accordingly, two main setups. In the first, the estimation is done retrospectively, after all of $Y_1,\dots,Y_n$ are observed. The second case  is of  sequential estimation, where $\mu_i$ should be estimated at time $i$, after observing $Y_1,\dots,Y_i$.  Let $\scd_i$ be the data set  based on which $\mu_i$ is estimated, excluding the $i$th observation itself. That is, $\scd_i = \{1,\dots,i-1\}$ in the sequential  case, and $ \scd_i =\{j: 1\le j\le n,j\ne i\}$ when the estimation is retrospective.

We could consider a more general situation in which the observations are $(Y_i,\bdf{X}_i)$, $i=1,\dots,n$, where the $\bdf{X}_i$s are observed covariates and
\begin{equation*}\hat{\mu}_i = \sum_{j \in \scd_i\union\{i\}} \Bigl(\beta_{ij}  Y_{j} + \bdf{\beta}^x_{ij} \bdf{X}_{j}\Bigr).
\end{equation*}
However, to simplify the presentation, we  discuss only the situation without observed covariates:
\begin{equation}\label{muhat1}
\hat{\mu}_i = \sum_{j \in \scd_i\union\{i\}} \beta_{ij}  Y_{j}.
\end{equation}

When $\mu_1,\mu_2,\dots$ are a realization of a   Gaussian process, the optimal estimator for $\mu_i$ based on the data set $\scd_i\union\{i\}$
is indeed linear, and is given by the Kalman filter (KF).  However, in more
general state space models, and certainly when the model is misspecified,  the Kalman filter, or any other linear scheme, are not optimal. Yet, they may be taken as a reasonable starting point for the construction of a better estimator.  We consider in this paper   an empirical Bayes improvement of a  given linear filter which is nonparametric and does not depend on  structural  assumptions.

The  linear estimator $\hat\mu_i$ in \eqref{muhat1} can be be considered as a weighted average of two components, $Y_i$, and an estimator $\ti\mu_i$ based on all the observations available at time $i$ excluding the $i$th observations itself:
$$\tilde{\mu}_i = \sum_{j \in {\scd}_i } \tilde{\beta}_{ij}  Y_{j} .$$
In the Gaussian case, $\ti\mu_i$ and $\hat\mu_i$ are typically the sufficient statistics for $\mu_i$ given the data in $\scd_i$ and $\scd_i\union\{i\}$ respectively.
In the sequential Gaussian case the estimator $\tilde{\mu}_i$
is called  the optimal  one step ahead predictor of $\mu_i$
while $\hat{\mu}_i$ is the KF estimator of $\mu_i$,
$i=1,\ldots,n$. For background about the KF, state-space
models, and general facts about time series see, e.g., Brockwell and Davis (1991). We will hardly use that theory in the following development, since we aim for results that are true regardless
on whether various common state-space assumptions hold. In the sequel, when we want to emphasize that $\tilde{\mu}_i$ and $\hat{\mu}_i$ are the standard KF estimators
we will write $\tilde{\mu}^K_i$ and $\hat{\mu}^K_i$, but the following derivation is for a general pair $\tilde{\mu}_i$ and $\hat{\mu}_i$.

Our goal in this paper is to use $\ti\mu_i$ as a basis for the construction of an estimator which   improves upon $\hat\mu_i$. In fact, we try to find the best estimator of the form:
\begin{align}
\label{eqn:imp0}\hat{\mu}_{ig}& = \tilde{\mu}_i + g(Y_i- \tilde{\mu}_i),\quad i=1,\dots,n.\\\intertext{ Let}
\delta &\equiv \argmin_g E \sum_{i=1}^n (\hat\mu_{ig}-\mu_i)^2
\label{eqn:imp}
\end{align}

Thus, we use a simple coordinate-wise function, as was introduced by Robbins (1951)  in the context of compound decision:

\begin{Definition}\label{def:1} A function ${\bdf f}: \R^n \rightarrow \R^n$   is called simple coordinate-wise function, if it has the representation
$\bdf{f}(X_1,\ldots ,X_n)=\bigl(f(X_1),\ldots ,f(X_n)\bigr)$ for  some $f: \R \rightarrow \R$.
\end{Definition}

Our improvement, denoted $\bdf{\delta}(\cdot)$, is a simple coordinate-wise function of $(\bdf{Y}-\bdf{\tilde{\mu}})$.
In the theory of compound decision and empirical Bayes, the search for an optimal simple-coordinate-wise  decision
function is central. We elaborate in the next section.
The improved  estimator $\mu_{i \delta}$ is denoted $\mu_i^I$, and in vector notations we write in short
 $$\bdf{\mu}^I = \bdf{\tilde{\mu}} + \bdf{\delta}.$$

\subsection{Empirical Bayes and non-exchangeable observations} The ideas of empirical Bayes (EB) and compound decision (CD) procedures were developed by Robbins (1951, 1955, 1964), see  the review papers of Copas (1969)
and Zhang (2003), and the paper of Greenshtein and Ritov (2008) for results relating
compound decision, simple coordinate-wise  decision functions and permutational invariant
decision functions.

The classical EB/CD theory is restricted to independent exchangeable observations and to  permutation invariant procedures, and in particular it excludes the utilization of explanatory variables. Fay and Herriot (1979) suggested a way to extend the ideas of parametric EB (i.e, linear decision functions corresponding to Gaussian measurement and prior) to
handle covariates.
Recently, there is an effort to extend the EB ideas, so they may be
incorporated in the presence of covariates also in the context of non-parametric
EB, see, Jiang and Zhang (2010),
Cohen et al. (2013), and Koenker and Mizera (2013).
Our paper may be viewed as a continuation of this effort.

The above papers extended the discussion to the situation where the observations, due to the covariates, are not exchangeable. However, the estimated parameters themselves, $\mu_1,\dots,\mu_n$, are permutation invariant.  Thus, in all these problems,  centering each response by a linear transformation of the covariates transforms the problem into a classical EB problem. In our setup of a time series, the estimated variables are not permutation invariant, and   the explanatory variables of $Y_i$ are the available observations $Y_j, \; j \neq i$,
so there is an obvious strong dependence between the response variables and the covariates and the response is degenerate conditional on the covariates.

 Furthermore,
 in all the above mentioned papers the extension of EB ideas to handle covariates
 is done in a retrospective setup, where all the observations  are given in advance.
 Under the time series structure that we study, it is natural to consider real time sequential   estimation of the $\mu$'s.  In Section \ref{sec:seq} we consider the sequential case, where at stage $i$ the decision function should be approximated
based on the currently available data. Our analysis would be based on an extension of  Samuel (1965).
The
retrospective case  is simpler and will
be treated first in Section \ref{sec:nonseq}.  A small simulation study is presented in Section \ref{sec:sim}, and a real data example is discussed in Section \ref{sec:real}.

\subsection{Estimated simple coordinate-wise function}

Most EB/CD solutions involve simple coordinate-wise functions. By the nature of the problem, these functions are  estimated from the data, which is used symmetrically.  \begin{equation}
\label{scwf}\bdf{\hat f}(X_1,\dots,X_n)=
\bigl(\hat f(X_1;X_{(1)},\dots,X_{(n)}),\dots,\hat f(X_n;X_{(1)},\dots,X_{(n)})\bigr),
\end{equation}
where  $X_{(1)}\leq\dots\leq X_{(n)}$ are the ordered statistics

Unfortunately, any permutation invariant function can be written in this way. Suppose for simplicity that $X_1,\dots,X_n$ are real. Let $\bdf\psi(X_1,\dots,X_n):\R^n\to\R^n$ be a permutation invariant function. Let $\ind(\cdot)$ be the indicator function. It is possible to write $\bdf\psi=\bdf{\hat f}$ as in \eqref{scwf}, with
\begin{equation*}
\hat f(x; X_{(1)},\dots,X_{(n)}) = \sum \bdf\psi_{i}\bigl(X_{(1)},\dots,X_{(n)}\bigr)\ind(x=X_{(i)}),
\end{equation*}
or a smooth version of this function.

Actually, any function that is estimated from the data and is used only on that data can be written as a simple coordinate-wise  function.

Intuitively, the set of simple coordinate-wise functions is a strict subset of the set of permutation invariant functions. We therefore consider a function $\bdf{\hat f}$ as simple coordinate-wise function if it approximates a function $\bdf f$ that is simple coordinate-wise function by Definition \ref{def:1}.
This later function may be random (i.e., a stochastic process), with non-degenerate asymptotic distribution.

\subsection{Assumptions}
The performance of our estimators will be measured by their mean squared error loss, in vector notation:
$E ||\bdf{\mu}^I-\bdf{\mu}||^2$, $ E ||\tilde{\bdf{\mu}}-\bdf{\mu}||^2$, and $E ||\hat{\bdf{\mu}}-\bdf{\mu}||^2$. Let $\scf_i $ be the smallest $\sig$-field under which $Y_j$, $j\in \scd_i$ are measurable. The dependency of different objects on $n$ will be suppressed, when there will be no danger of confusion.

\begin{Assumption}\label{ass1}
For every $i=1,\dots,n$, the estimator $\ti\mu_i$ is $\scf_i$ measurable. It is  Lipschitz in $Y_j$ with a constant $\rho_{|i-j|}$, where $\limsup_{M\to\en}M^2\rho_M<1$. That is: For $j\in \scd_i$ let $\ti\mu_i'$ be $\ti\mu_i$, but computed with $Y_j$  replaced by $Y_j+d$. Then, $|\ti\mu_i'-\ti\mu_i|\le \rho_{|i-j|}d$.
\end{Assumption}
This condition is natural when $\ti\mu$ is  KF for a stationary Gaussian process, where typically $\beta_{ij}$ decreases exponentially with $|i-j|$. The main need for generalizing the KF is to include filters which are based on estimated parameters.

The Kalman filter for an ergodic process also satisfies the following condition. It has no real importance for our results, except giving a standard benchmark.

\begin{Assumption}\label{ass2}
Suppose that  there is a  $\al_n\in\scf_n$, $\al_n<1$:
\begin{equation}\label{asymAl}
\hat{\mu}_i= \alpha_n \tilde{\mu}_i + (1-\alpha_n)Y_i + \zeta_i, \;\text{where } E \zeta_i^2\to 0,
\end{equation}
as $n\rightarrow \infty$, $0<\liminf i/n\leq\limsup i/n<1 $.
\end{Assumption}
\noindent{\bf Remark:}
Our major example is the ergodic normal state-space model. If the assumed model is correct, and $\ti\mu_i$ and $\hat\mu_i$ are the optimal estimators, then $\ti\mu_i$ is a sufficient statistics for $\mu_i$ given $\scd_i$. The  estimators satisfy \eqref{asymAl}
with $\alpha_n\equiv (1+\tau^2)^{-1}$, where $\tau^2$ is the asymptotic variance of $\mu_i$ given $\tilde{\mu}_i$. In the iterative Kalman filter method for computing $\hat{\mu}_i$, with some abuse of notation, the  values
$\alpha_i=(1+\tau^2_i)^{-1}$ are computed, with $\tau^2_i$ the variance of $\mu_i$ given $\tilde{\mu}_i$, and we have $\hat{\mu}_i= \alpha_i \tilde{\mu}_i + (1-\alpha_i)Y_i$.

By considering the functions
$g(z)\equiv 0$ and $g(z)=(1-\alpha)z$ in \eqref{eqn:imp0}, it is easy to see that  $\bdf{\mu}^I$ has asymptotically   mean squared error not larger than $\tilde{\bdf{\mu}}$  and
$\hat{\bdf{\mu}}$, respectively. In fact, we argue that unless the process is asymptotically Gaussian, there is a strict improvement.

The derivation of the Kalman filter  is based on an assumed stochastic model for the sequence $\mu_1,\dots,\mu_n$. Very few properties of the the process are relevant, and it is  irrelevant to our discussion whether the model is true or not. However, we do need some tightness. We expect that typically $|\mu_i-\mu_{_{i-1}}|$ is not larger than $\log n$, and $\ti\mu_i$ is sensible at least as $\ti\mu_i\equiv Y_{i-1}$. Since $\max|Y_i-\mu_i|=\o_p(\sqrt{\log n})$, the next condition is natural:

\begin{Assumption}\label{ass3}
It holds:
\eqsplit{
    \frac1n\summ i1n P\bigl( |Y_i-\ti\mu_i|> \log n\bigr)\le \frac1{(\log n)^{8}}.
    }
\end{Assumption}

\section{ Retrospective estimation \label{sec:nonseq}}

Denote,
 \begin{equation}\label{nuZ}
  \begin{split}
    Z_i&= Y_i- \tilde{\mu}_i;  \\
    \nu_i&= \mu_i -\tilde{\mu}_i,\qquad i=1,\dots,n.
   \end{split} \end{equation}
Clearly, $Z_i=\nu_i+\eps_i$.  Since $\eps_i$ is independent both of $\mu_1,\dots,\mu_n$ and of $\eps_j$, $j\ne i$, it  is independent of $\nu_i$. Thus,  the conditional distribution of $Z_i$ given $\nu_i$ is $N(\nu_i,1)$.
However, this is not a regular EB problem. It is not so even for the regular KF.  Write $\ti{\bdf\mu}=B\bdf{Y}=B\bdf{\mu}+B\bdf{\eps}$. Then $\bdf{\nu}=(I-B)\bdf{\mu}-B\bdf{\eps}$. It is true that  $\bdf{Z}=\bdf{\nu}+\bdf{\eps}$, but the vectors $\bdf{\nu}$ and $\bdf{\eps}$ are not independent. Hence $\bdf{Z}|\bdf{\nu}\not\dist N_n(\bdf{\nu},I_n)$. Yet, we rely only on the marginal distributions of $Z_i|\nu_i$, $i=1,\dots,n$.

To elaborate,
 \eqsplit{
    \begin{pmatrix}\bdf Y \\ \bdf\mu\end{pmatrix}
    =\begin{pmatrix}(I-B)^{-1} & 0\\B(I-B)^{-1} & I\end{pmatrix}
    \begin{pmatrix}\bdf Z\\\bdf\nu \end{pmatrix}.
  }
Therefore, the joint density of $Z$ and $\nu$ is proportional to
 $$
    f_{\bdf \mu}\bigl(\bdf\nu+B(I-B)^{-1}\bdf Z\bigr)\exp\bigl(-\|\bdf Z-\bdf \nu\|^2/2\bigr),
$$
where $f_{\bdf\mu}$ is the joint density of the vector $\bdf\mu$. Clearly, unless $f_{\bdf\mu}$ is multivariate normal, the conditional density of $\bdf Z$ given $\bdf\nu$ is not multivariate standard normal.

\begin{example}
Suppose $n=2$,  we observe $Y_0,Y_1$, and use $\ti\mu_i=\gamma Y_{1-i}$, $i=0,1$. Then
 \eqsplit{
    Z_i = Y_i-\gamma Y_{1-i}\quad&\implies Y_i = \frac{1}{1-\gamma^2}(Z_i+ \gamma Z_{1-i})
    \\
    \nu_i=\mu_i-\gamma Y_{1-i} \quad&\implies Y_i=\frac1{\gamma}(\mu_{1-i}-\nu_{1-i})
    \\
    &\implies Z_i = \frac1\gamma (\mu_{1-i}-\nu_{1-i}-\gamma \mu_i+\gamma \nu_i).
  }
Suppose further that $\mu_i$ is finitely supported. It follows from the above calculations that the distribution of the vector $\bdf Z$ given the vector $\bdf \nu$ is finitely supported as well.
\end{example}

The estimator in vector notation is
$\bdf{\mu}^I= \bdf{\tilde{\mu}}+\bdf{\delta}$,
where $\bdf{\delta}=\bigl(\delta(Z_1),\ldots ,\delta(Z_n)\bigr)\t$. As discussed in the introduction,  simple coordinate-wise functions like
$\bdf{\delta}$ are central in EB and CD models. However, our decision function $\bdf{\mu}^I$ is not a simple coordinate-wise function of the observations. It is  a hybrid of non-coordinate-wise  function $\tilde{\bdf{\mu}}$ and a simple coordinate-wise  one, $\bdf{\delta}$. The  $\tilde{\bdf{\mu}}$ component accounts for the
non-coordinate-wise  information from the covariates, while $\bdf{\delta}$ aims to improve it in a coordinate-wise  way after the information from all other observations was accounted for by $\tilde{\bdf\mu}$.

By Assumption \ref{ass1}, the dependency between the $Z_i$s conditioned on $\bdf\nu$ is only local, and hence, if we consider a permutation invariant procedure, which treats neighboring observations and far away ones the same, the dependency disappears asymptotically, and we may consider only the marginal normality of the $\bdf\nu$. The basic ideas of  EB are helpful and
we get the  representation \eqref{eqn:brown} of ${\delta}$ as given below.

Let
\begin{equation*}
f_Z(z)= \frac1n \summ i1n \varphi(z-\nu_i),
\end{equation*}
where $\varphi$ is the standard normal density. Note that this is not a kernel estimator---the kernel is with fixed bandwidth and  $\nu_1,\dots,\nu_n$ are unobserved.
Let $I$ be uniformly distributed over $1,\ldots ,n$. Denote by $F^n$ the distribution of the random pairs $(\nu_I, \nu_I+\eta_I)$, where $\eta_1,\dots,\eta_n$ are \iid standard normal independent of the other random variables mentioned so far and the randomness is induced by the random index $I$ and the $\eta$s.  One marginal distribution of $F^n$ is the empirical distribution of $\nu_1,\dots,\nu_n$, while the density of the other is given by $f_Z$. We denote the marginals by $F_\nu^n$ and $F_Z^n$.  Finally, note that $Z_I$ given $\nu_I$ has the distributing of $F^n_{Z|\nu}$, i.e., the conditional distribution of $F^n$.

It is well known that asymptotically, the Bayes procedure for estimating $\nu_i$ given $Z_i$ is approximated by the Bayes procedure with  $F^n_\nu$ as prior, and it is determined by $f_Z$. The optimal simple coordinate-wise function $\delta=\delta^n$  depends only on marginal joint distribution of $(\nu_I,Z_I)$. In fact, it depends only on  $f_Z$.
As in Brown (1971) we have:
\begin{equation}
\label{eqn:brown}
\delta^n(z)=E_{F^n}(\nu_I|\nu_I+\eta_I=z)= z+ \frac{f_{Z}'(z)}{f_{Z}(z)},
\end{equation}
where $f'_{Z}$ is the derivative of $f_Z$.
The dependency on $n$ is suppressed in the notations.

Note that $\delta^n$ is a random function, and in fact, if $\mu_1,\dots,\mu_n$ is not an ergodic process, it may not have an asymptotic deterministic limit. Yet, it would be the object we estimate in \eqref{eqn:brown1} below.

It is of a special interest to characterize when $\delta=\delta^n$ is asymptotically linear, in which case  the improved estimator
 $\mu_i^I=\tilde{\mu}_i + \delta^n(Z_i)$  is asymptotically a  linear combination of $\tilde{\mu}_i$ and $Y_i$. Only in such a case the difference between
the loss of the improved estimator $\bdf{\mu}^I=\ti{\bdf\mu}+\bdf\delta$ and that of the  estimator $\hat{\bdf{\mu}}$ may be asymptotically of   $o(n)$.
It follows from \eqref{eqn:brown} that, the optimal decision $\del(Z)$ is  approximately $(1-\al)Z$, if and only if, $f_Z'/f_Z=(\log f_Z)'$ is approximately proportional to $z$. This happens only if $f_Z$ converges to a  Gaussian distribution. Since $f_Z$ is a convolution of a Gaussian kernel with the the prior, this can happen only if the prior is asymptotically Gaussian. In our setup where $F_\nu^n$ plays the role of a prior, in order to have asymptotically  linear improved estimator
we need that  $F^n_\nu$ converges weakly to  a normal distribution $G$.

The above is formally stated in the following Proposition \ref{prop:lin}.
\begin{proposition}
\label{prop:lin}
Under assumptions 1-3,
$n^{-1}E\|\bdf\mu^I-\bdf{\hat{\mu}}\|^2\to 0$  implies that  the sequence $F_\nu^n-N(0,(1-\al_n)/\al_n)$  converges weakly to the zero measure.
\end{proposition}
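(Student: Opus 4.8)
The plan is to argue by contradiction through a chain of reductions, each one translating the vanishing $L^2$-distance $n^{-1}E\|\bdf\mu^I-\hat{\bdf\mu}\|^2\to0$ into a statement about the random prior $F_\nu^n$. First I would use Assumption~\ref{ass2} to replace $\hat{\bdf\mu}$ by the honest linear combination: since $\hat\mu_i=\alpha_n\ti\mu_i+(1-\alpha_n)Y_i+\zeta_i$ with $n^{-1}\sum E\zeta_i^2\to0$, we have $\hat\mu_i-\ti\mu_i=(1-\alpha_n)Z_i+\zeta_i$, so $\mu_i^I-\hat\mu_i=\delta^n(Z_i)-(1-\alpha_n)Z_i-\zeta_i$. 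Hence the hypothesis is equivalent to
\begin{equation*}
\frac1n\summ i1n E\bigl(\delta^n(Z_i)-(1-\alpha_n)Z_i\bigr)^2\to 0 .
\end{equation*}
In the notation of the $F^n$ construction, this says $E_{F^n}\bigl(\delta^n(Z_I)-(1-\alpha_n)Z_I\bigr)^2\to0$, i.e.\ $\delta^n(Z)$ is $L^2(F_Z^n)$-close to the linear map $(1-\alpha_n)z$.

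Next I would pass from the posterior-mean map $\delta^n$ to the density $f_Z$ via Brown's identity \eqref{eqn:brown}: $\delta^n(z)-(1-\alpha_n)z=\alpha_n z+f_Z'(z)/f_Z(z)$, so the displayed convergence becomes
\begin{equation*}
\int\Bigl(\alpha_n z+\frac{f_Z'(z)}{f_Z(z)}\Bigr)^2 f_Z(z)\,dz \longrightarrow 0 .
\end{equation*}
Now I would recognize the integrand: writing $\phi_\sigma$ for the $N(0,\sigma^2)$ density with $\sigma^2=(1-\alpha_n)/\alpha_n$ (so that $\alpha_n z=-\sigma^{-2}z\cdot\sigma^2\alpha_n/\,\cdot$... more cleanly, $(\log\phi_\sigma)'(z)=-z/\sigma^2=-\alpha_n z/(1-\alpha_n)$), the bracket is $(\log f_Z)'(z)-(\log\phi_\sigma)'(z)$ up to the factor $(1-\alpha_n)^{-1}$; but it is cleaner to note directly that the quantity above is the relative Fisher information of $f_Z$ with respect to the Gaussian $\phi_\sigma$, $I(f_Z\,\|\,\phi_\sigma)$. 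Since $f_Z=F_\nu^n*\phi$ is itself a convolution with the standard Gaussian, $F_\nu^n*\phi$ and $\phi_\sigma=\phi_{\sqrt{\sigma^2-1}}*\phi$ differ only through their "de-convolved" parts $F_\nu^n$ and $N(0,\sigma^2-1)=N(0,(1-\alpha_n)/\alpha_n - 1)$... and here I must be careful about which Gaussian appears in the statement. The statement compares $F_\nu^n$ to $N(0,(1-\alpha_n)/\alpha_n)$, which is the marginal variance of $Z$ under the "ideal Gaussian" prior, i.e.\ $f_Z$ should be compared to $N(0,1+(1-\alpha_n)/\alpha_n)=N(0,1/\alpha_n)$; so $\sigma^2=1/\alpha_n$ and the Gaussian prior is $N(0,\sigma^2-1)=N(0,(1-\alpha_n)/\alpha_n)$, matching the claim.

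The third step is the analytic heart: relative Fisher information controlling weak convergence. From vanishing relative Fisher information $I(f_Z\,\|\,\phi_{1/\alpha_n})\to0$ I would first deduce (via the relative-entropy analogue of the logarithmic Sobolev / de Bruijn inequality, or by a direct Gronwall argument along the heat semigroup) that the relative entropy $D(f_Z\,\|\,\phi_{1/\alpha_n})\to0$, hence by Pinsker $\|f_Z-\phi_{1/\alpha_n}\|_{TV}\to0$, so $f_Z$ converges weakly to the Gaussian $N(0,1/\alpha_n)$ (passing to subsequences if $\alpha_n$ does not converge, and using Assumption~\ref{ass3} plus Assumption~\ref{ass1} to keep the family $\{F_\nu^n\}$ tight so that the limiting variances stay bounded away from the degenerate cases). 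Finally, weak convergence of the convolutions $F_\nu^n*\phi\to N(0,1/\alpha_n)=N(0,(1-\alpha_n)/\alpha_n)*\phi$ implies, by taking Fourier transforms and dividing by the non-vanishing characteristic function $e^{-t^2/2}$ of $\phi$, that $\widehat{F_\nu^n}(t)\to e^{-t^2(1-\alpha_n)/(2\alpha_n)}$ pointwise, i.e.\ $F_\nu^n-N(0,(1-\alpha_n)/\alpha_n)\Rightarrow 0$, which is the assertion.

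I expect the main obstacle to be the third step --- going from control of the (relative) Fisher information of the \emph{smoothed} density $f_Z$ to a genuine weak statement about the \emph{unsmoothed} $F_\nu^n$, while handling the fact that $\delta^n$ and $\alpha_n$ are random and need not converge. The smoothing actually helps (it makes $f_Z$ a nice positive analytic density so Fisher information is well defined and the de Bruijn machinery applies), but one must be careful that the $L^2(F_Z^n)$ convergence of $\delta^n(Z)-(1-\alpha_n)Z$ is not destroyed on the tails where $f_Z$ is tiny; this is exactly where Assumption~\ref{ass3} (few large $|Z_i|$) and the Lipschitz Assumption~\ref{ass1} (so that the conditioning-on-$\bdf\nu$ dependence is asymptotically negligible and the marginal picture $F^n$ is legitimate) are needed. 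A secondary nuisance is that everything is random and indexed by $n$ with no assumed limit, so all limiting statements should be phrased through arbitrary subsequences and the tightness of $\{F_\nu^n\}$, exactly as the Proposition's "converges weakly to the zero measure" formulation anticipates.
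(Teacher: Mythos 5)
Your argument follows the same route the paper itself takes (in the discussion immediately preceding the Proposition): use Assumption 2 to reduce the hypothesis to $\delta^n(z)\approx(1-\alpha_n)z$ in $L^2(F_Z^n)$, invoke Brown's identity to turn this into the statement that $(\log f_Z)'$ is approximately linear, conclude that $f_Z$ is approximately $N(0,1/\alpha_n)$, and deconvolve the standard Gaussian to recover $F_\nu^n\approx N(0,(1-\alpha_n)/\alpha_n)$. You supply the quantitative tools (relative Fisher information, log-Sobolev, Pinsker, characteristic-function deconvolution) that the paper leaves implicit, and you correctly identify which Gaussian goes with $f_Z$ and which with the prior, so the proposal is a sound and in fact more detailed version of the paper's own argument.
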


Given the observations $Z_1,\dots,Z_n$, let $\hat{F}^n_Z$ be the empirical distribution of $Z_1,\ldots ,Z_n$.
We will show that as $n \rightarrow \infty$ the `distance' between $\hat{F}^n_Z$ and $F^n_Z$ gets smaller, so that
$f_Z$ and and its derivative  may be replaced in \eqref{eqn:brown}  by appropriate kernel estimates based on
$\hat{F}_Z^n$, and yield
a good enough estimator  $\hat{\delta}^n$ of $\delta^n$. In the sequel we will occasionally drop the superscript $n$.
Note, that we do not assume that $F^n$ approaches some limit $F$ as $n \rightarrow \infty$, although this
is the situation if we assume that $Z_1,Z_2,\ldots $ is an ergodic stationary process, however our assumptions on that
process are milder.

We now state  the above formally.
Consider the two kernel estimators $\hat{f}_Z(z)=n^{-1}\sum_j K_\sig(Z_j-z)$ and $\hat{f}_Z'(z)=n^{-1}\sum_j K'_\sig(Z_j-z)$, where $K_\sig(z)={\sig^{-1}}K\bigl(z/{\sig}\bigr)$,  $\sig=\sig_n$. For simplicity, we use the same bandwidth to estimate both the density and its derivative.
We define the following estimator $\hat{\delta} \equiv \hat{\delta}_\sigma$ for $\delta$:

\begin{equation}
\label{eqn:brown1}
             \hat{\delta}(z)=\hat{\delta}^n_\sig(z) \equiv         z+ \frac{\hat{f}_Z'(z)}{\hat{f}_Z(z)}.
\end{equation}
Brown and Greenstein (2009) used the normal kernel, we prefer to use the logistic kernel $2(e^{x}+e^{-x})^{-1}$ (it is the derivative of the logistic cdf, $(1+e^{-2x})^{-1}$, and hence its  integral is 1). We suggest this kernel since it ensures that $|\hat\del(z)-z|<\sig^{-1}$, see the Appendix.   However, we do  adopt the recommendation of Brown and Greenshtein (2009)
for a very slowly converging sequence $\sig_n=1/\log(n)$.

Denote
$     \bdf{\hat{\mu}}^I = \bdf{\tilde{\mu}}+ {\bdf{\hat{\delta}}}
    $.

\begin{theorem}\label{th:1} Under Assumptions \ref{ass1} and \ref{ass3}:
\begin{itemize}
\item[i)] \begin{equation*}
E||{\bdf{\hat{\mu}}}^I-\bdf{\mu}||^2
    \le E||{\bdf{{\mu}}}^I-\bdf{\mu}||^2+ o(n)\le E||{\bdf{{\hat\mu}}}-\bdf{\mu}||^2+ o(n).
\end{equation*}

\item[ii)] Under Assumptions \ref{ass1}--\ref{ass3}, if $\al_n\cip\al\in(0,1)$ and  $F_\nu^n$  converges weakly to a  distribution different from  $N(0,(1-\al)/\al)$, then there is  $c<1$ such that for  large enough $n$:
\begin{equation*}
   E||{\hat{\bdf{{\mu}}}}^I-\bdf{\mu}||^2\le cE||\hat{\bdf{\mu}}-\bdf{\mu}||^2.
\end{equation*}
\end{itemize}\end{theorem}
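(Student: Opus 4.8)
The plan is to treat the middle quantity in part~(i), $E\|\bdf\mu^I-\bdf\mu\|^2=E\sum_i(\delta^n(Z_i)-\nu_i)^2$ --- obtained from $\bdf\mu^I-\bdf\mu=\bdf\delta^n-\bdf\nu$ and $\hat{\bdf\mu}^I-\bdf\mu=\hat{\bdf\delta}-\bdf\nu$ in the notation of \eqref{nuZ} and \eqref{eqn:brown} --- as an oracle, and to prove the two inequalities separately. The right-hand one, $E\|\bdf\mu^I-\bdf\mu\|^2\le E\|\hat{\bdf\mu}-\bdf\mu\|^2+o(n)$, says that $\delta^n$ is, up to $o(n)$, the best coordinate-wise correction of $\tilde{\bdf\mu}$: it is the Bayes rule for the prior $F^n_\nu$, so in the idealized model where $\bdf Z\mid\bdf\nu\sim N_n(\bdf\nu,I_n)$ it dominates the affine correction $g(z)=(1-\al_n)z$ that, substituted into \eqref{eqn:imp0}, reproduces $\hat{\bdf\mu}$ (up to an aggregate mean-square error $o(n)$ coming from the non-constancy of the combining weights, cf.\ Assumption~\ref{ass2}); by the only-local dependence of $\bdf Z$ given $\bdf\nu$ (Assumption~\ref{ass1}) the idealized and the genuine risks differ by $o(n)$, which I would formalize by a coupling that, for each $i$, resamples the few neighbouring $\eps_j$ on which $\delta^n(Z_i)$ actually depends, controlling the accumulated error through $\sum_j\rho_{|i-j|}$ and the condition $\limsup_{M\to\infty}M^2\rho_M<1$.

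The left-hand inequality $E\|\hat{\bdf\mu}^I-\bdf\mu\|^2\le E\|\bdf\mu^I-\bdf\mu\|^2+o(n)$ is empirical-Bayes consistency and is the heart of the matter. Expanding $\|\hat{\bdf\delta}-\bdf\nu\|^2-\|\bdf\delta^n-\bdf\nu\|^2$ and bounding the cross term by Cauchy--Schwarz against a priori bounds of size $O(n)$ up to polylogarithmic factors (the one for $\hat{\bdf\delta}$ coming from the logistic-kernel cap $|\hat\delta(z)-z|\le1/\sig_n$ of the Appendix) reduces everything to showing $E\|\hat{\bdf\delta}-\bdf\delta^n\|^2=o(n)$ with a polylogarithmic margin. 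For $E\sum_i\bigl(\hat\delta(Z_i)-\delta^n(Z_i)\bigr)^2$ I would (a) use Assumption~\ref{ass3} to restrict to the event $\{|Z_i|\le\log n\}$, whose complement contributes at most $n\cdot(\log n)^{-8}\cdot(\log n)^{O(1)}=o(n)$ because there $\bigl(\hat\delta(Z_i)-\delta^n(Z_i)\bigr)^2$ is only polylogarithmically large; (b) on $[-\log n,\log n]$ write $\hat\delta-\delta^n=\hat f_Z'/\hat f_Z-f_Z'/f_Z$ and control it through uniform closeness of $(\hat f_Z,\hat f_Z')$ to $(f_Z,f_Z')$ --- the smoothing bias at bandwidth $\sig_n=1/\log n$ is $O(\sig_n^2)\to0$ since $f_Z$ is a Gaussian convolution, hence infinitely smooth with bounded derivatives, while the stochastic error is small because, by the only-local dependence of the $Z_i$ (Assumption~\ref{ass1}), a variance computation with covariances decaying in $|i-j|$ shows the empirical measure $\hat F^n_Z$ concentrates around the mixture $F^n_Z$ with density $f_Z$; and (c) keep $\hat f_Z$ bounded away from $0$ on that range with overwhelming probability, falling back where it is not on the deterministic logistic-kernel bound so that the ratio never blows up.

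For part~(ii) I would combine part~(i) with the limits forced by the extra hypotheses. By part~(i), $E\|\hat{\bdf\mu}^I-\bdf\mu\|^2\le E\|\bdf\delta^n-\bdf\nu\|^2+o(n)$, and by Assumption~\ref{ass2} (as in the first paragraph) $E\|\hat{\bdf\mu}-\bdf\mu\|^2=E\sum_i\bigl((1-\al_n)Z_i-\nu_i\bigr)^2+o(n)$. Since $F^n_\nu$ converges weakly to $G$, $\al_n\cip\al\in(0,1)$, and, marginally, $Z_i\mid\nu_i\sim N(\nu_i,1)$, both normalized risks converge --- the first to the Bayes risk $r_B:=E\bigl(E[\nu\mid Z]-\nu\bigr)^2$ and the second to $r_L:=E\bigl((1-\al)Z-\nu\bigr)^2$, where $\nu\sim G$ and $Z\mid\nu\sim N(\nu,1)$. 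For the first limit one needs that the random Tweedie rule $\delta^n$ formed from $f_Z=\frac1n\sum\varphi(\cdot-\nu_i)$ converges to $z\mapsto z+(g*\varphi)'(z)/(g*\varphi)(z)=E[\nu\mid Z=z]$, which follows from $f_Z\to g*\varphi$ (a consequence of $F^n_\nu\Rightarrow G$) and the smoothness of Gaussian convolutions, together with a uniform-integrability bound such as $\sup_iE\nu_i^2<\infty$ furnished by the tightness built into Assumptions~\ref{ass1} and \ref{ass3}. Finally $r_B\le r_L$ always, with equality iff $E[\nu\mid Z]=(1-\al)Z$ almost surely, i.e.\ iff $g*\varphi$ is Gaussian, i.e.\ iff $G=N(0,(1-\al)/\al)$; since $G$ is assumed different from this, $r_B<r_L$ strictly, so any fixed $c$ with $r_B/r_L<c<1$ works for all sufficiently large $n$.

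The step I expect to be the real obstacle is the empirical-Bayes consistency bound $E\|\hat{\bdf\delta}-\bdf\delta^n\|^2=o(n)$: it forces control of a ratio of kernel estimates of a density and of its derivative, formed with the extremely slow bandwidth $\sig_n=1/\log n$ from data $Z_1,\dots,Z_n$ that are only locally, not independently, distributed, and uniformly enough over the full range $[-\log n,\log n]$ that Assumption~\ref{ass3} allows. The logistic kernel and Assumption~\ref{ass3} are precisely the devices that keep this ratio from exploding, while Assumption~\ref{ass1} is what validates the variance computation behind $\hat F^n_Z\approx F^n_Z$. A secondary, more routine difficulty is the recurring passage from the genuine risk of a coordinate-wise rule to its value in the idealized exchangeable model --- an $o(n)$ coupling estimate again governed by $\sum_j\rho_{|i-j|}$ and $\limsup_{M\to\infty}M^2\rho_M<1$.
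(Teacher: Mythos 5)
Your proposal is correct and follows essentially the same route as the paper: the reduction of part (i) to showing $E\|\hat{\bdf\delta}-\bdf\delta^n\|^2=o(n)$, controlled via the logistic-kernel cap $|\hat f_Z'/\hat f_Z|<2\sig_n^{-1}$, the truncation to the region where $f_Z\ge(\log n)^{-3}$ furnished by Assumption~\ref{ass3}, the localization of $\ti\mu_i$ under Assumption~\ref{ass1} to make the $(\nu_j,\eps_j)$ approximately $M$-dependent for the kernel bias--variance computation, and a large-deviation bound keeping the denominator away from zero; part (ii) is exactly the Bayes-risk-versus-linear-risk comparison underlying Proposition~\ref{prop:lin}. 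You are in fact more explicit than the paper at two points it leaves terse --- the coupling that equates the genuine and idealized (exchangeable) risks up to $o(n)$, and the role of Assumption~\ref{ass2} in identifying $\hat{\bdf\mu}$ with the affine correction $g(z)=(1-\al_n)z$ in the right-hand inequality of part (i).
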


\begin{proof}\mbox{}

\begin{itemize}

\item[i)] The proof is given in the appendix

  \item[ii)] The same arguments used   to prove  part i)  may be used to prove a modification of Proposition \ref{prop:lin},  in which
 $\mu^I_i$  is replaced by $\hat{\mu}_i^I$, when assuming in addition that $ i/n\in{(\zeta,1-\zeta)}$ for any $\zeta\in(0,1)$.
\end{itemize}
\end{proof}

Part i) of the above theorem assures us that asymptotically the improved estimator does as good as $\hat{\bdf{\mu}}$; part ii)  implies
that in general  the improved estimator does asymptotically strictly better.   Obviously the asymptotic improvement is not always strict since the Kalman filter is optimal under a Gaussian state-space
model.

\section {Sequential estimation\label{sec:seq}}

We consider now the case $\scf_i=\sig(Y_1,\ldots ,Y_{i-1})$, $i \leq n$. The definition of the different estimators is the same as in the previous section with the necessary adaption to  the different information set.
Our aim is to find a  sequential estimator, denoted $\hat{\bdf{\mu}}^{IS}$,
that satisfies
$ E|| \hat{\bdf{\mu}}^{IS} - \bdf{\mu}||^2+ o(n) < E|| \hat{\bdf{\mu}} - \bdf{\mu}||^2$.
By a sequential estimator $\hat{\bdf{\mu}}^{IS}=(\psi^1,\ldots ,\psi^n)$ we mean that
$\psi^i\in\scf_i$, $i=1,\ldots ,n$.
A natural approach, which indeed  works, is to let $\psi^i= \tilde{\mu}_i + \hat{\delta}^i$, where $\hat{\delta}^i$ is defined as in  \eqref{eqn:brown1}, but with $\hat f=\hat f_i$ restricted to the available data $Z_1,\ldots ,Z_{i-1}$, $i=1,\ldots ,n$. Let $\hat{\bdf{\delta}}^S=(\hat{\delta}^1,\ldots ,\hat{\delta}^n)$.

We define:
\begin{equation*}
\hat{\bdf{\mu}}^{IS}= \tilde{\bdf{\mu}}+ \hat{\bdf{\delta}}^S.
\end{equation*}
Our main result in this section:

\begin{theorem}
\label{th:2}
Theorem \ref{th:1} holds with $\bdf{\hat\mu}^{IS}$ and $\bdf{\mu}^{IS}$ replacing $\bdf{\hat\mu}^{I}$ and $\bdf{\mu}^{I}$, respectively.
%
%
\end{theorem}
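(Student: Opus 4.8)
Write $\delta^i$ for the oracle of \eqref{eqn:brown} computed with $f_Z$ replaced by the mixture $f^i_Z(z)=(i-1)^{-1}\summ j1{i-1}\varphi(z-\nu_j)$, i.e.\ the Bayes rule for the empirical prior $F_\nu^{i-1}$, and $\bdf\mu^{IS}=\tilde{\bdf\mu}+\bigl(\delta^1(Z_1),\dots,\delta^n(Z_n)\bigr)\t$; thus $\hat\delta^i$ estimates $\delta^i$ from $Z_1,\dots,Z_{i-1}$ exactly as $\hat\delta$ estimates $\delta$ in Section~\ref{sec:nonseq}, with bandwidth $\sig_i\asymp1/\log n$. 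The plan is to rerun the appendix proof of Theorem~\ref{th:1} on each window $\{1,\dots,i-1\}$ separately and to absorb the discrepancy between ``expanding window'' and ``full sample'' through the sequential compound-decision device of Samuel (1965). Fix a cutoff $n_0$ with $n_0\to\en$ and $n_0(\log n)^2=o(n)$, say $n_0=n/(\log n)^3$, and split every sum over $i=1,\dots,n$ at $n_0$. On the early block $i\le n_0$ the per-coordinate loss of both $\hat{\bdf\mu}^{IS}$ and $\bdf\mu^{IS}$ is $O((\log n)^2)$, uniformly in $i$ --- this uses the logistic-kernel bound $|\hat\delta^i(z)-z|<\sig_i^{-1}$, the decomposition $Z_i=\nu_i+\eps_i$, and the tail control of Assumption~\ref{ass3}, exactly as in the truncation estimates of the appendix of Theorem~\ref{th:1} --- so the early block contributes $o(n)$ to every quantity in the theorem and may be discarded. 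On the late block $i-1\ge n_0\to\en$, so each window is ``large'' in the sense the Theorem~\ref{th:1} bounds require.

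For the first inequality of part~i), $E\|\hat{\bdf\mu}^{IS}-\bdf\mu\|^2\le E\|\bdf\mu^{IS}-\bdf\mu\|^2+o(n)$, I would apply, with sample size $i-1$ in place of $n$, the appendix comparison of the kernel empirical Bayes rule with its oracle. Its ingredients carry over: the variance of $\hat f^i_Z$ and $(\hat f^i_Z)'$ around $f^i_Z$ and $(f^i_Z)'$ contributes, after summation over $i>n_0$, only $o(n)$ --- by Assumption~\ref{ass1} the $Z_j$ in a window are locally dependent given $\bdf\nu$ and behave, for these second moments, like an \iid\ sample of size $i-1$ up to a constant; the smoothing bias contributes $o(1)$ per coordinate because $f^i_Z$ is already a Gaussian convolution and $\sig_i\to0$; and the plug-in ratio $\hat f'_Z/\hat f_Z$ is controlled on $\{|Z_i|\le\log n\}$ by Assumption~\ref{ass3} and off it by the $\sig_i^{-1}$ bound. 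Summing the resulting per-coordinate excess risks $r_i\to0$ over $i>n_0$ gives $o(n)$.

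For the second inequality of part~i), $E\|\bdf\mu^{IS}-\bdf\mu\|^2\le E\|\hat{\bdf\mu}-\bdf\mu\|^2+o(n)$, the point is that the expanding-window oracle is asymptotically no worse than the fixed linear filter that $\hat{\bdf\mu}$ realizes. By Assumption~\ref{ass2}, $\hat\mu_i-\mu_i=(1-\al_i)Z_i-\nu_i+\zeta_i$ with $\summ i1n E\zeta_i^2=o(n)$, so $E\|\hat{\bdf\mu}-\bdf\mu\|^2=\summ i1n\bigl(\al_i^2E\nu_i^2+(1-\al_i)^2\bigr)+o(n)$. Since $\delta^i$ is Bayes for $F_\nu^{i-1}$, the risk of $\delta^i$ under that empirical prior is at most that of the linear rule $z\mapsto(1-\al_i)z$, which equals $\al_i^2(i-1)^{-1}\summ j1{i-1}\nu_j^2+(1-\al_i)^2$. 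The Samuel argument --- which uses Assumption~\ref{ass1} to pass from this in-window Bayes risk to the actual coordinate risk $E(\delta^i(Z_i)-\nu_i)^2$ --- then yields
\begin{equation*}
\summ i1n E\bigl(\delta^i(Z_i)-\nu_i\bigr)^2\le\summ i1n\Bigl(\al_i^2\,\frac1{i-1}\summ j1{i-1}E\nu_j^2+(1-\al_i)^2\Bigr)+o(n),
\end{equation*}
and a Toeplitz-lemma argument identifies the right-hand side with $E\|\hat{\bdf\mu}-\bdf\mu\|^2+o(n)$: writing $v_i=E\nu_i^2$, one has $\summ i1n\bigl((i-1)^{-1}\summ j1{i-1}v_j-v_i\bigr)=o(n)$ whenever the $v_i$ are bounded and slowly varying, which holds under weak convergence of $F_\nu^n$ and the ergodicity implicit in Assumption~\ref{ass2}, and the same device absorbs $\al_i\cip\al$. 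This proves part~i).

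Part~ii) follows, as in the proof of part~ii) of Theorem~\ref{th:1}, from the sequential analogue of Proposition~\ref{prop:lin} with indices restricted to $i/n\in(\zeta,1-\zeta)$: for such $i$ the windowed law $F_\nu^{i-1}$ converges weakly to the same limit $G$ as $F_\nu^n$ (in the ergodic stationary situation of the remark after \eqref{eqn:brown1}), so when $G\ne N(0,(1-\al)/\al)$ it follows from \eqref{eqn:brown} that $\delta^i$ is bounded away in $L^2(f^i_Z)$ from every linear function, the compound-decision identity turns this into $E\|\hat{\bdf\mu}-\bdf\mu\|^2-E\|\bdf\mu^{IS}-\bdf\mu\|^2\ge\eps n$ for some $\eps>0$, and combining with part~i) and $E\|\hat{\bdf\mu}-\bdf\mu\|^2\asymp n$ gives $E\|\hat{\bdf\mu}^{IS}-\bdf\mu\|^2\le cE\|\hat{\bdf\mu}-\bdf\mu\|^2$ with $c<1$ for $n$ large. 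I expect the main obstacle to be the Samuel-type step inside the second inequality of part~i): unlike in the retrospective case, the ``prior'' available to $\delta^i$ is the empirical law $F_\nu^{i-1}$ of the \emph{past} $\nu$'s --- neither the conditional law of $\nu_i$ given $\scf_i$ nor the full empirical law $F_\nu^n$ --- and making the passage from in-window Bayes risk to true coordinate risk rigorous, and uniform enough in $i$ to sum to $o(n)$, under only the weak-dependence Assumption~\ref{ass1}, is the real work; for part~ii) one must similarly upgrade weak convergence of $F_\nu^n$ to that of the windowed laws $F_\nu^{i-1}$ for $i\asymp n$ (automatic in the ergodic stationary case). By contrast the early-block truncation and the per-window kernel bounds are routine repetitions of the Theorem~\ref{th:1} arguments.
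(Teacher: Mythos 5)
Your first step---the per-window kernel estimation error plus the early-block truncation---is essentially the paper's route (the paper invokes the Theorem \ref{th:1} argument to get $E\bigl(\hat\delta^i(Z)-\delta^i(Z)\bigr)^2\to0$), and your treatment of part ii) mirrors the paper's. The genuine gap is in your second inequality of part i), where you replace the paper's key device, Lemma \ref{lem:samuel}, with a window-by-window comparison. The paper's lemma is a global telescoping inequality, $n^{-1}\sum_{j=1}^n R(\delta^j,\nu_j)\le R_n=n^{-1}\sum_{j=1}^n R(\delta^n,\nu_j)$, proved by induction on $i$ from the optimality of $\delta^i$ for the \emph{cumulative} sum $\sum_{j\le i}R(\cdot,\nu_j)$; it compares the sequential oracle to the \emph{retrospective} oracle, which in turn trivially beats the single linear rule $z\mapsto(1-\al_n)z$ and hence $\hat{\bdf\mu}$. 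You instead assert that the risk of $\delta^i$ \emph{at the coordinate $i$} is controlled by its Bayes risk under the past empirical prior $F_\nu^{i-1}$, i.e.\ by $\al_i^2(i-1)^{-1}\sum_{j<i}E\nu_j^2+(1-\al_i)^2$. No such per-coordinate inequality holds: $R(\delta^i,\nu_i)$ can greatly exceed the average $(i-1)^{-1}\sum_{j<i}R(\delta^i,\nu_j)$ when $\nu_i$ is atypical relative to the past (the Bayes rule for the past prior then shrinks toward the wrong place), and this is exactly the difficulty that Samuel's telescoping resolves---but only for the full sum and only against $\delta^n$, not against window-specific linear rules.

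The second problem is your Toeplitz step. Identifying $\sum_i\al_i^2(i-1)^{-1}\sum_{j<i}E\nu_j^2$ with $\sum_i\al_i^2E\nu_i^2+o(n)$ requires $v_i=E\nu_i^2$ to be bounded and slowly varying, which the paper deliberately does not assume (it explicitly allows $F^n$ not to converge and the process not to be ergodic). It can genuinely fail under the paper's hypotheses: e.g.\ for $v_j$ bounded but changing regime at $j=n/2$ one gets $\sum_i(i-1)^{-1}\sum_{j<i}v_j-\sum_iv_i\asymp n$, and when the discrepancy has the unfavorable sign your upper bound exceeds $E\|\hat{\bdf\mu}-\bdf\mu\|^2$ by a constant multiple of $n$, so the comparison you need is lost. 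You correctly flag the ``passage from in-window Bayes risk to coordinate risk'' as the real work, but the statement you propose to prove there is not the right one and is false in general; the fix is to prove and use the cumulative-optimality induction of Lemma \ref{lem:samuel} (comparing to $\delta^n$), after which neither the per-coordinate claim nor the Toeplitz regularity is needed.
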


In order to prove Theorem \ref{th:2} we adapt
Lemma 1 of Samuel (1965). Samuel's result is stated for a compound decision
problem, i.e., the parameters are fixed, and the observations are independent. The result compares the performance of the optimal estimators in the sequential and retrospective procedures.
It is not clear a priori whether retrospective estimation is easier or more difficult than the sequential. On the one hand, the retrospective procedure is using more information when dealing with the $i$th parameter. On the other hand, the sequential estimator can adapt better to non-stationarity in the parameter sequence. Samuel proved that the latter is more important. There is no paradox here, since the retrospective procedure is optimal only under the assumption of permutation invariance, and under permutation invariance, the weak inequality in Lemma \ref{lem:samuel} below is, in fact, equality.

Our approach is to
 rephrase and generalize Samuel's lemma.
Let $\eta_1,\dots,\eta_n$ be $N(0,1)$ \iid random variables independent of $(\mu_i,\eps_i)$, $i=1,\dots,n$.  Let $L(\nu_i,\hat{\nu}_i)$ be the loss
for estimating $\nu_i$ by $\hat{\nu}_i$.
For every $i \leq n$ let ${\delta}^i$ be the decision function
that satisfies:
 \begin{equation*}
  \begin{split}
     \delta^i&= \argmin_\delta \E_{\bdf\eta} \sum_{j=1}^i L\bigl(\nu_j, \delta(\nu_i+\eta_i)\bigr)
     \\&=
\argmin_\delta \sum_{j=1}^i \E_{\bdf\eta} L\bigl(\nu_j,\delta(\nu_i+\eta_i)\bigr)
\\
&\equiv \argmin_\delta \sum_{j=1}^i R(\delta,\nu_j),\quad\text{say,}
   \end{split} \end{equation*}
where $\E_{\bdf\eta}$ is the expectation over $\bdf\eta$.  That is, $\delta^i$ is the functional that minimizes the sum of risks for estimating the components $\nu_1,\dots,\nu_i$, but it is applied only for estimating $\nu_i$.  The quantity $R(\delta^j,\nu_j)$ is the analog of $R(\phi_{F_j}, \theta_j)$ in Samuel's formulation. In analogy to Samuel (1965) we define
$ R_n\equiv n^{-1}\summ j1n R(\delta^n,\nu_j)  $, the empirical Bayes risk of the non-sequential problem.

\begin{lemma}
\label{lem:samuel}
$$ n^{-1} \summ j1n R(\delta^j,\nu_j) \leq R_n. $$
\end{lemma}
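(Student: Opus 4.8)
The plan is to prove the sharper, \emph{pathwise} statement that for every $i\le n$,
\[
\sum_{j=1}^{i} R(\delta^{j},\nu_{j})\;\le\;\sum_{j=1}^{i} R(\delta^{i},\nu_{j}),
\]
the case $i=n$ being the lemma after dividing by $n$. Here $\nu_{1},\dots,\nu_{n}$ are held fixed and the only averaging is over the auxiliary noise $\bdf\eta$, so nothing probabilistic beyond the definitions of $R$ and of $\delta^{i}=\argmin_{\delta}\sum_{j=1}^{i}R(\delta,\nu_{j})$ enters. Write $S_{i}\equiv\min_{\delta}\sum_{j=1}^{i}R(\delta,\nu_{j})=\sum_{j=1}^{i}R(\delta^{i},\nu_{j})$ for the optimal retrospective value on the first $i$ coordinates.

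I would argue by induction on $i$. The base case $i=1$ is immediate, since $\delta^{1}$ minimizes $\delta\mapsto R(\delta,\nu_{1})$, so $R(\delta^{1},\nu_{1})=S_{1}$. For the inductive step, assume $\sum_{j=1}^{i-1}R(\delta^{j},\nu_{j})\le S_{i-1}$. Adding the new term and using the hypothesis,
\[
\sum_{j=1}^{i}R(\delta^{j},\nu_{j})=\sum_{j=1}^{i-1}R(\delta^{j},\nu_{j})+R(\delta^{i},\nu_{i})\le S_{i-1}+R(\delta^{i},\nu_{i}).
\]
The one substantive step is now that $S_{i-1}$, being the minimum of $\sum_{j=1}^{i-1}R(\delta,\nu_{j})$ over all $\delta$, is in particular no larger than its value at $\delta=\delta^{i}$:
\[
S_{i-1}\le\sum_{j=1}^{i-1}R(\delta^{i},\nu_{j}).
\]
Substituting and recombining,
\[
\sum_{j=1}^{i}R(\delta^{j},\nu_{j})\le\sum_{j=1}^{i-1}R(\delta^{i},\nu_{j})+R(\delta^{i},\nu_{i})=\sum_{j=1}^{i}R(\delta^{i},\nu_{j})=S_{i},
\]
which closes the induction. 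Taking $i=n$ gives $\sum_{j=1}^{n}R(\delta^{j},\nu_{j})\le S_{n}=\sum_{j=1}^{n}R(\delta^{n},\nu_{j})=nR_{n}$, and dividing by $n$ is exactly the claim.

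Thus the entire content is the monotonicity $S_{i-1}\le\sum_{j=1}^{i-1}R(\delta^{i},\nu_{j})$: forcing one common decision rule to be simultaneously good on a larger index set cannot make it beat, on any fixed sub-collection, the rule tailored to exactly that sub-collection. I do not anticipate a real obstacle here; the only points to check are that the minimizers $\delta^{i}$ exist (so that each $S_{i}$ is attained) and that the risks $R(\delta^{i},\nu_{j})$ are finite, both of which hold in the Gaussian model under the losses considered — and if one wishes to avoid the existence question one may replace each $\delta^{i}$ by a near-minimizer and let the slack tend to $0$ at the end, which does not change the conclusion. Finally, the inequality becomes an equality whenever the optimal rules $\delta^{i}$ can be chosen independent of $i$; this is the mechanism behind the remark, made just before the statement, that under permutation invariance the sequential and retrospective empirical Bayes risks coincide.
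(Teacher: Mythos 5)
Your proof is correct, and it is essentially the argument the paper relies on: the paper does not spell out a proof but defers to Lemma 1 of Samuel (1965), whose proof is exactly this induction (optimality of $\delta^{i-1}$ on the first $i-1$ coordinates compared against the competitor $\delta^{i}$, then recombining). Nothing further is needed.
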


The proof of the lemma is formally similar to the proof of Lemma 1 of Samuel (1965).

\begin{proof}[Proof of Theorem \ref{th:2} ] From Theorem \ref{th:1}, $E\bigl(\hat{\delta}^i(Z) - \delta^i(Z)\bigr)^2 \rightarrow 0$. The last fact coupled with Lemma \ref{lem:samuel} implies part i) of Theorem \ref{th:2}. Part ii) is shown similarly to part ii)
of Theorem \ref{th:1}.
\end{proof}

\section{Simulations.}
\label{sec:sim}

We present now simulation results for the following state-space
model.

\begin{equation}
\begin{split}
\label{eqn:sim}
Y_{i} &=\mu _{i}+\varepsilon _{i}\text{ } \\
\mu _{i} &=\phi \mu _{i-1}+U_{i}, \quad i=1,\dots,n,
\end{split}
\end{equation}%
where $\varepsilon _{i}\sim N(0,1)$, $i=1,\ldots ,n$, are independent of each
other and of $U_{i}$, $i=1,\ldots ,n$.  The variables $U_i$, $i=1,\ldots ,n$ are independent,
$U_i= X_i I_i$ where $X_i \sim N(0,v)$ are independent, while $I_1,\dots,I_n$ are \iid Bernoulli with mean 0.1,  independent of each other and of $X_i$, $i=1,\ldots ,n$. We  study the twelve cases that
are determined by $\phi=0.25,0.75$ and $v=0,1,\ldots ,5$. In each case we investigate both the sequential
and the retrospective setups.

If   $U_1, \dots, U_n$,  were i.i.d Normal, the data would follow a Gaussian state-space, and the corresponding  Kalman filter estimator would be optimal. Since the $U_i$'s are not normal,  the corresponding AR(1) Kalman filter is not optimal (except in the degenerate case, $v=0$),  though it is optimal
among linear filters. This is reflected in our
simulation results where for the cases $v=0,1$ our ``improved'' method $\hat{\bdf{\mu}}^I$
performs slightly worse than $\hat{\bdf\mu}=\hat{\bdf{\mu}}^K$. It improves in all the rest. The above is stated and proved formally in the following
proposition.  It could also be shown indirectly by applying part ii) of  Theorem \ref{th:1}.

\begin{proposition}
\label{prop:strict}
 Consider the state-space model, as defined by  \eqref{eqn:sim}.
If $U_i$ are not normally distributed then
\begin{equation*}
   E||{\hat{\bdf{{\mu}}}}^I-\bdf{\mu}||^2\le  c E||\hat{\bdf{\mu}}^K-\bdf{\mu}||^2 ,
\end{equation*}
for a constant $c \in (0,1)$ and large enough $n$.
\end{proposition}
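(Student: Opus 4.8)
The plan is to deduce Proposition~\ref{prop:strict} from part~(ii) of Theorem~\ref{th:1}, applied to $\bdf{\hat\mu}=\bdf{\hat\mu}^K$ and its improvement $\bdf{\hat\mu}^I$ for the model \eqref{eqn:sim} with $v>0$ (the excluded case $v=0$ is exactly the one in which $U_i\equiv 0$ is, degenerately, normal). Four hypotheses of Theorem~\ref{th:1}(ii) must be verified, three of which are routine. Assumptions~\ref{ass1}--\ref{ass3}: the AR(1)-plus-noise model is stationary and ergodic; its optimal linear (Kalman) smoother is $\tilde\mu_i=\sum_{j\ne i}b_jY_j$ with weights $b_j$ decaying geometrically in $|i-j|$ (this is the ARMA(1,1) Wiener filter), so Assumption~\ref{ass1} holds with $\rho_M$ geometric, and $Z_i=Y_i-\tilde\mu_i=\nu_i+\eps_i$ has at worst exponential tails, so Assumption~\ref{ass3} holds for $n$ large. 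Next, $\alpha_n\cip\alpha\in(0,1)$: the Kalman recursion for an ergodic model converges to steady state with limit $\alpha=(1+\tau^2)^{-1}$, where $\tau^2$ is the steady-state variance of $\mu_i$ given $\{Y_j\}_{j\ne i}$, which is finite and strictly positive when $v>0$. Finally, $F_\nu^n$ converges weakly: $\{(\mu_i,\eps_i)\}$ is stationary ergodic and $\nu_i$ is a fixed summable-weight linear functional of the shifted sequence, so $\{\nu_i\}$ is stationary ergodic; the ergodic theorem then gives weak convergence of $F_\nu^n$ to the common marginal law $\mathcal L(\nu)$ (replacing the finite-$n$ smoother by the bi-infinite one costs only an asymptotically negligible term, exactly as in the proof of Theorem~\ref{th:1}).

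The one substantive point is that $\mathcal L(\nu)$ is \emph{not} normal, hence in particular not $N(0,(1-\alpha)/\alpha)$. Since $\nu_i=\mu_i-\tilde\mu_i$ is linear in $\mu_i$ and in $\{Y_j\}_{j\ne i}$, it has an $L^2$-expansion $\nu_i=\sum_k c_kU_k+\sum_{j\ne i}d_j\eps_j$ in the independent innovations, whose $\eps$-part is Gaussian and independent of the $U$-part. Isolate the contemporaneous innovation: $\nu_i=c_iU_i+R_i$ with $R_i$ independent of $U_i$. If $c_i\ne0$, then $c_iU_i$ is a genuine mixture (weights $0.1$ and $0.9$) of $N(0,c_i^2 v)$ and a point mass at $0$, hence non-normal, and by Cram\'er's decomposition theorem a sum of a non-normal random variable and an independent one cannot be normal; so $\nu_i$ is non-normal. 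Everything thus reduces to showing $c_i\ne0$.

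This is the main obstacle: one must rule out that $\tilde\mu_i$ absorbs $U_i$ \emph{exactly}. Intuitively it cannot, since $U_i$ enters each future observation $Y_{i+1},Y_{i+2},\dots$ only together with fresh independent noise, so no linear combination of $\{Y_j\}_{j\ne i}$ can recover it without error. To make this precise, write $c_i=\cov(\nu_i,U_i)/\sigma_U^2$ with $\sigma_U^2$ the variance $v/10$ of $U_i$, use $U_i=Y_i-\phi\mu_{i-1}-\eps_i$, and exploit that the smoother residual $\nu_i$ is orthogonal to every $Y_j$ with $j\ne i$ and to $\eps_i$: this yields $\cov(\nu_i,Y_i)=\tau^2$ and $\cov(\nu_i,\mu_{i-1})=b_{i-1}$, so $\cov(\nu_i,U_i)=\tau^2-\phi\,b_{i-1}$. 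Combining the geometric form of the ARMA(1,1) smoother with $\hat\mu_i=(1-\alpha)Y_i+\alpha\tilde\mu_i$ (Assumption~\ref{ass2} in the steady state) gives $b_{i-1}=(1-\alpha)\rho/\alpha=\rho\tau^2$, where $\rho\in(0,1)$ is the stable moving-average root; hence $\cov(\nu_i,U_i)=\tau^2(1-\rho\phi)>0$ because $\rho\phi<1$, so $c_i>0$. Thus $\mathcal L(\nu)$ is non-normal, Theorem~\ref{th:1}(ii) applies, and the asserted inequality holds with some $c\in(0,1)$ for all large $n$. (Equivalently, one can argue directly: with $\nu$ non-normal, $f_Z$ is a non-normal convolution with $\varphi$, so $f_Z'/f_Z$ is not affine, the optimal $\delta$ of \eqref{eqn:brown} strictly beats the linear KF rule on each coordinate with $i/n$ bounded away from $0$ and $1$, and Theorem~\ref{th:1}(i) transfers this to $\bdf{\hat\delta}$.)
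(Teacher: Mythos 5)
Your proof is correct and rests on the same crux as the paper's own argument: the limiting law $G$ of $\nu_i=\mu_i-\tilde\mu_i$ can be normal only if the innovations $U_i$ are normal, and a non-normal $G$ forces the optimal coordinate-wise rule $\delta$ to be nonlinear, hence strictly better than the linear Kalman update. The paper reaches the conclusion directly (the Bayes estimator of $\nu_i$ from $Z_i\sim N(\nu_i,1)$ under prior $G$ is linear, and hence coincides with the KF, only if $G$ is normal), and justifies non-normality of $\nu_i$ in a single line by asserting that $U_i$ is independent of $\mu_{i-1}$ and of $\tilde\mu_i$. That independence holds verbatim only in the sequential case; in the retrospective case $\tilde\mu_i$ uses $Y_{i+1},Y_{i+2},\dots$, each of which contains $U_i$, so the paper's justification is loose there. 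Your innovation expansion $\nu_i=c_iU_i+R_i$ with $R_i$ independent of $U_i$, combined with Cram\'er's decomposition theorem and the computation $\cov(\nu_i,U_i)=\tau^2-\phi b_{i-1}=\tau^2(1-\rho\phi)>0$, is precisely what is needed to make the non-normality claim airtight in the retrospective case, and is the genuinely added content of your write-up. Two minor remarks: (a) the identity $b_{i-1}=(1-\alpha)\rho/\alpha$ leans on the geometric form of the two-sided Wiener smoother for AR(1)-plus-noise, which you invoke without derivation; it is standard, but note that all you actually need is $\tau^2-\phi b_{i-1}\neq 0$, so even a cruder bound on $b_{i-1}$ would do. (b) Routing the conclusion through Theorem \ref{th:1}(ii) rather than the paper's direct Bayes argument is legitimate (the paper itself notes this alternative), though it makes the proposition inherit whatever is only sketched in the proof of Theorem \ref{th:1}(ii); your closing parenthetical supplies the direct route as well, so nothing essential is lost.
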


\begin{proof}

Given the estimators $\tilde{\mu}_i^K$ and $\hat{\mu}_i^K$, $i=1,\ldots ,n$, let $Z_i=Y_i-\tilde{\mu}_i$.
Then $Z_i= \mu_{i-1}+ U_i - \tilde{\mu}_i^K+\varepsilon_i = \nu_i + \varepsilon_i$. The distribution
$G^i$  of $\nu_i$ may be normal only if $U_i$ is normal, since $U_i$ is independent of $\mu_{i-1}$ and $\tilde{\mu}_i$.
The distributions  $G^i$ converge to a distribution $G$ as $i$ and $n-i$ approach infinity.
As before, $G$ is normal only if $U_i$ are normal. Now, asymptotically  optimal estimator for $\mu_i$ under squared loss
and given the observation $Y_i$, is
$\tilde{\mu}_i^K +\hat{\nu}_i$, where  $\hat{\nu}_i $ is the Bayes estimator under a prior  $G$ on $\nu_i$ and an observation
$Y_i \sim N(\nu_i,1)$. This Bayes estimator is linear and coincide with the KF estimator $\hat{\mu}^K_i$, only if $G$ is normal.

\end{proof}

Analogous discussion and situation are valid also in the sequential case.
 In our simulations  the parameters $\phi$ and $VAR\left(
U_{i}\right) $  are treated as known. Alternatively, maximum likelihood
estimation assuming (wrongly) normal inovations yields results similar to those reported in Table \ref{tab1}.

The simulation results in  Table \ref{tab1}  are for the case $n=500$. Each entry
is based on  100 simulations. In each realization we recorded
$||\hat{\bdf{\mu}}- \bdf{\mu}||^2$ and $||\hat{\bdf{\mu}}^I- \bdf{\mu}||^2$, and each
entry is based on the corresponding average. In order to speed the asymptotics we allowed a `warm up' of 100 observations prior to the $n=500$ in the sequential case,
we also allowed a `warm up' of 50 in both sides of the $n=500$ observations in the retrospective case.

\begin{table}
\caption{Mean Squred error of the two estimator for an autoregressive process with aperiodic
normal shocks}\label{tab1}
\begin{tabular}{|l|rrrrrr|rrrrrr|}
 \hline
$\phi $ & \multicolumn{6}{|c|}{\textsc{0.25}} & \multicolumn{6}{|c|}{\textsc{%
0.75}} \\ \hline
$v$ & \textsc{0} & \textsc{1} & \textsc{2} & \textsc{3} & \textsc{4} & \textsc{%
5} & \textsc{0} & \textsc{1} & \textsc{2} & \textsc{3} & \textsc{4} &
\textsc{5}
\\
&&&&&\multicolumn{7}{l}{\textit{Retrospective filter:}}&
\\ \hline
$\widehat{\mu }$$^\dag$ & {\small 0} & {\small 71} &
{\small 156} & {\small 226} & {\small 290} & {\small 333} & {\small 0} &
{\small 49} & {\small 147} & {\small 235} & {\small 301} & {\small 350}
\\
$\mu ^{I}$$^\ddag$ & {\small 23} & {\small 66}
& {\small 125} & {\small 148} & {\small 160} & {\small 177} & {\small 24} &
{\small 91} & {\small 166} & {\small 215} & {\small 253} & {\small 271}%
\\
&&&&&\multicolumn{7}{l}{\textit{Sequential filter:}}&  \\ \hline
$\widehat{\mu }$ $^\dag$ & {\small 0} & {\small 47} &
{\small 145} & {\small 234} & {\small 309} & {\small 355} & {\small 0} &
{\small 83} & {\small 187} & {\small 264} & {\small 325} & {\small 372} \\
$\mu ^{I}$ $^\ddag$ & {\small 39} & {\small 81}
& {\small 129} & {\small 147} & {\small 159} & {\small 158} & {\small 34} &
{\small 112} & {\small 184} & {\small 216} & {\small 239} & {\small 253}%
\\\hline
\multicolumn{13}{l}{\dag Kalman filter, \ddag Improved.}
\end{tabular}%
\end{table}

It may be seen that when the best linear filter is optimal or nearly optimal
(when $v=0$ or approximately so), our improved method is slightly worse than the Kalman filter estimator, however as $v$ increases, the advantage of the improved
method may become significant.

It seems that in the case $\phi=0.25$ the future observations are not very helpful,
and in our simulations there are cases
where the simulated risk of $\hat{\bdf{\mu}}$ in the sequential case is even smaller than the corresponding simulated risk of the retrospective case. This could be an artifact of the simulations, but also a result of noisy estimation of the coefficients $\beta_{ij}$ of the mildly informative future observations.

\section{ Real Data Example}\label{sec:real}
\begin{table}[t]
\caption{The retrospective case: Cross-validation estimation of the average squared risk.}
\label{tabA1}
\begin{tabular}{r|ddd}
\hline
$p=0.95$ & $AR(1)$ & $AR(2)$ \textit{\ } & $ARIMA(1,1,0)$ \\ \hline
{\small Kalman filter - }$\hat{\lambda}_{i}^{K}$ & 27.1 & 19.4 & 20.4 \\
{\small Improved method - }$\hat{\lambda}_{i}^{I}${\small \ } & 18.7 & 18.5
& 17.4 \\
{\small Naive method - }$\hat{\lambda}_{i}^{N}$ & 26.4 & 26.4 & 26.4%
\end{tabular}%
\end{table}

In this section we demonstrate the performance of our method on real data
taken from the FX  (foreign exchange) market in Israel. The data consists of the daily number of
swaps (purchase of of one currency for another with a given value date, done simultaneously with the selling back of the same amount with a different value date.  This way there is no foreign exchange risk)
in the OTC (over-the-counter)  Shekel/Dollar market.
We consider only the buys of magnitude 5 to 20 million dollars.
The time period is
January 2nd, 2009 to December 31st, 2013, a total of $n=989$ business days. The number of buys in each day is
24 on the average, with the range of 2--71.
In our analysis we used the first 100 observations as a `warm up',
similarly to the way it was done in our simulations section.

We denote by $X_{i}$, $i=1,\ldots,n$, the
number of buys on day $i$ and assume that $X_{i}\sim Po\left( \lambda
_{i}\right) .$ We transform the data by $Y_{i}=2 \sqrt{X_{i}+0.25}$ as
in Brown et al. (2010) and  Brown et al. (2013) in order to get an (approximately)
normal variable with variance $\sigma ^{2}=1.$

The
assumed model in this section is the following state space system of
equations:

 \eqsplit{
Y_{i} &=\mu _{i}+\varepsilon _{i} \\
\mu _{i} &\sim ARIMA\left( p,d,q\right),  \quad i=1,\ldots ,n,
  }
where $\mu _{i}=2 \sqrt{\lambda _{i}}$ and $\varepsilon _{i}\sim N(0,1)$ are independent of each other and of the $ARIMA\left(
p,d,q\right) $ process. We consider the following three special cases of $ARIMA\left(
p,d,q\right)$: $AR\left( 1\right) $, $AR\left( 2\right) $, and $%
ARIMA\left( 1,1,0\right) .$

Under each model there are induced Kalman filter estimators
$\tilde{\boldsymbol{\mu}}^K$, and $\hat{\boldsymbol{\mu}}^K$ that correspond to one step prediction and to the
update. Similarly, the improved estimator $\hat{\boldsymbol{\mu}}^I$ is defined. We denote the
sequential and retrospective estimators similarly with no danger  of confusion.

After estimating  $\mu_i$, we transform the result back
to get the estimator $\hat{\lambda}^J_i$ for $\lambda_i$, $\hat{%
\lambda}_{i}^{j}=0.25 \left( \hat{\mu}_{i}^{j}\right)^{2}$, $i=1,\ldots ,n$, $J\in\{\text{`}I\text{'},`K\text{'}\}$ where, $\hat{\mu}_{i}^{J}$ is the estimator of $\mu _{i}$ by method $%
J.$ We evaluate the performances of both estimation methods by the following
non-standard cross-validation method as described in Brown et al. (2013).  It is briefly explained in the following.

Let $p\in \left( 0,1\right) ,$ $p\thickapprox 1,$ and let $U_{1},\ldots ,U_{n}$
be independent given $X_{1},\ldots ,X_{n}$, where $U_{i}\sim B\bigr(
X_{i},p\bigr) $ are Binomial variables. It is known that $%
U_{i}\sim Po\bigl( p\lambda _{i}\bigr) $ and $V_{i}=X_{i}-U_{i}\sim
Po\bigl( \left( 1-p\right) \lambda _{i}\bigr) $ and they are independent
given $\lambda _{1},\ldots ,\lambda _{n}.$ We will use the `main' sub-sample $%
U_{1},\ldots ,U_{n}$ for the construction of both estimators (Kalman filter and
Improvement) while the `auxiliary' sub-sample $V_{1},\ldots ,V_{n}$ is used for
validation. Consider the following loss function,
\begin{equation*}
\begin{split}
\rho \left( J;\mathbf{U,V}\right)
&=\frac{1}{n}\overset{n}{\underset{i=1}{%
\sum }}\Bigl( \frac{\hat{\lambda}_{i}^{J}}{p}-\frac{V_{i}}{\left( 1-p\right)
}\Bigr) ^{2}
\\
&= \frac{1}{n p^{2}}\overset{n}{\underset{i=1}{\sum }}\left(
\hat{\lambda}_{i}^{J}-p\lambda _{i}\right) ^{2}
+\frac{1}{n (1-p)^{2}}\overset{n}{\underset{i=1}{\sum }}\left(
V_i-(1-p)\lambda _{i}\right) ^{2}
\\
&\hspace{3em}-\frac2{n} \summ i1n \Bigl(\frac{\hat{\lambda}_{i}^{J}}{p}-\lambda _{i}\Bigr)\Bigl(
\frac{V_i}{(1-p)}-\lambda _{i}\Bigr)
\\
&=\frac{1}{n p^{2}}\overset{n}{\underset{i=1}{\sum }}\left(
\hat{\lambda}_{i}^{J}-p\lambda _{i}\right) ^{2} +A_{n}+R_{n}\left( J\right), \quad J\in\{'K','I'\}.
\end{split}\end{equation*}%
The term $R_{n}\left( J\right)=\O_p(n^{-1/2}) $ and will be ignored.  We  estimate $A_n$ by the method of moments:
\begin{equation*}
  \begin{split}
   \hat A_n&=\frac{1}{n(1-p)^2}\summ i1n V_i.
   \end{split} \end{equation*}

 We
repeat the cross-validation process 500 times and average  the computed values of  $\rho \left( J;\mathbf{U,V}\right)-\hat A_{n}$.
When $p$ is close to 1, the average obtained is a plausible  approximation of the average squared
risk in estimating $\lambda_i$, $i=101,\ldots ,989$.
By the above method we approximated also the
average risk of the naive estimator $\hat{\lambda}_{i}^{N}=X_i$, $i=101,\ldots ,989$.
The approximations
for the retrospective and sequential cases, are displayed in Tables
\ref{tabA1} and \ref{tabA3}. The estimated ARIMA coefficient for the various models are given in Table \ref{tabA2}.

\begin{table}[t]
\caption{The retrospective case: Parameter estimation }
\label{tabA2}
\begin{tabular}{l|ddd}
\hline
$p=0.95$ & $AR(1)$ & $AR(2)$ & $ARIMA(1,1,0)$\textit{\ } \\
\hline
$\alpha $ & 12.38 & 14.218 & 0.01 \\
$\phi _{1}$ & -0.28 & -0.341 & -0.6 \\
$\phi _{2}$ &  & -0.124 &\\
$\sig^2$ & 3.4 & 3.4 &4.7\\
\end{tabular}%
\end{table}

\begin{table}[t]
\caption{The sequential case: Cross-validation approximation of the average squared risk}
\label{tabA3}

\begin{tabular}{l|ddd}
 \hline
$p=0.95$ & $AR(1)$ & $AR(2)$ & $ARIMA(1,1,0)$ \\ \hline
{\small Kalman filter - }$\hat{\lambda}_{i}^{K}$ & 19.2 & 19.2 & 21.2 \\
{\small Improved method - }$\hat{\lambda}_{i}^{I}${\small \ } & 19.0 & 19.2 &
22.6 \\
{\small Naive method - }$\hat{\lambda}_{i}^{N}$ & 26.4 & 26.4 & 26.4%
\end{tabular}%
\end{table}

From Table \ref{tabA1} we can observe that in the retrospective case the
improved method does uniformly better than the naive estimator and
the Kalman filter. In fact, in all except a small deterioration  under the ARIMA(1,1,0) with sequential filtering, the performance of the improved method is quite uniform, showing its robustness against model miss-specification.

Somewhat surprising is  that the Kalman filter under AR(1) with retrospective estimation does not do better than the naive filter, but do so considerably in  the sequential case. The reason is that the AR(1) model does not fit the data. When it is enforced on the data, the Kalman filter gives too much weight to the surrounding data, and too little to the ``model free'' naive estimator. This result show the robustness of our estimator.

In fact, we did a small simulation, where the process was AR(2), with the parameters as estimated for the data. When an AR(1) was fitted to the data, the retrospective Kalman filter was strictly inferior to the sequential one.

In the sequential case, Table \ref{tabA3},  the
improved method does better than the naive method, but contrary to the
non-sequential case, it improves upon the Kalman filter only in the AR(1) and
AR(2) models, while in the ARIMA(1,1,0) model the Kalman filter does slightly
better.

\section{Appendix: Proof of Theorem \ref{th:1}}

Note that by \eqref{eqn:imp},  obviously, $E||{\bdf{{\mu}}}^I-\bdf{\mu}||^2 < E||\hat{\bdf{\mu}}-\bdf{\mu}||^2+ o(n)$. Thus,  in order to obtain $E||{\bdf{\hat{\mu}}}^I-\bdf{\mu}||^2 < E||\hat{\bdf{\mu}}-\bdf{\mu}||^2+ o(n)$
it is enough to show that $E|| \bdf{\hat{\delta}}-\bdf{\delta}||^2 =o(n).$

First recall:
 \eqsplit{
    K(x) &= \frac{2}{\bigl(e^{x}+e^{-x}\bigr)^2}
    \\
    K'(x) &= -4\frac{e^{x}-e^{-x}}{\bigl(e^{x}+e^{-x}\bigr)^3}
    \\
    \frac{K'(x)}{K(x)} &= -2 \frac{e^{x}-e^{-x}}{e^{x}+e^{-x}}\in(-2,2).
  }
Thus
\begin{equation}\label{KpoKb}
\sup_z\Bigl|\frac{\hat f_Z'(z)}{\hat f_Z(z)}\Bigr|<2\sig_n^{-1}.
\end{equation}

By Assumption \ref{ass1}, if we replace $\ti\mu_i$ by a similar (unobserved) function $\ti\mu_i^*$, where $Y_j$, $j\in \scd_i$, $|j-i|>n^{\gamma}$ is replaced by $\mu_j$, then $\max_i|\ti\mu_i^*-\ti\mu_i|=\O_p\bigl( n^{-\gamma}\bigr) \max_i|\eps_i|=\o_p(n^{-\gamma/2})$, for any $\gamma>0$. Define  $\nu_i^*$ and $Z_i^*$ as in \eqref{nuZ}, but where $\ti\mu_i$ is replaced by $\ti\mu_i^*$, $i=1,\dots,n$. Since $|\hat f''|\leq \sig_n^{-3}$, $\max_i|Z_i-Z_i^*|$ is ignorable for our approximations. In the following all variables are replaced by their $*$ version, but we drop the $*$ for simplicity.

Let $L_n= \log n$. By Assumption \ref{ass3} with probability greater than $1-L_n^{-4}$, all but $n/L_n^4$ of the $Z_i$s are in $(-L_n,L_n)$, hence, their density is mostly not too small:
\begin{equation}\label{fzNs}
\int \ind\bigl(f_Z(z)<L_n^{-3}\bigr) f_Z(z)dz<L_n^{-2}.
\end{equation}

Let
 \eqsplit{
 \bar\varphi(z)&=\varphi*K_{\sig_n}=\int K\Bigl({\frac{z-e}{\sig_n}}\Bigr)\varphi(e)de
 \\
  \bar f_Z&=f_Z*K_{\sig_n}
  =  \frac 1n \summ j1n \bar\varphi(z-\nu_j)
  }
Then
 \eqsplit{
    \hat f_Z(z)-\bar f_Z(z)
    &=  \frac{1}{\sig_n n} \summ j1n K\Bigl(\frac{z-Z_j}{\sig_n}\Bigr)
    - \frac1{n} \summ j1n \bar\varphi(z-\nu_j)
    \\
    &=  \frac{1}{\sig_n n} \summ j1n \Bigl\{K\Bigl(\frac{z-\nu_j-\eps_j}{\sig_n}\Bigr) - \int K\Bigl(\frac{z-\nu_j-e}{\sig_n}\Bigr)\varphi(e)de\Bigr\}.
      }
Since $\eps_j$ and $\nu_j$ are independent, this difference has mean 0. Moreover, since $(\nu_j^*,\eps_j)$ and $(\nu_k^*,\eps_k)$ are independent for $|j-k|>M$, the above expression has variance of order $M(n\sig_n)^{-1}$. The difference $\|\bar f_Z-f_Z\|_\en$ is of order $\sig_n^2$.

A similar expansion works for $\hat f_Z'$, except that the variance now is of order $M(n\sig_n^3)^{-1}$. Regular large deviation argument shows that when $f_Z(Z_i)>L_n^{-3}$ then $\hat f_Z(Z_i)<L_n^{-3}/2$ with exponential small probability. By \eqref{KpoKb} and \eqref{fzNs} it follows that the the approximation $\hat f_Z'(Z_i)/\hat f_Z(Z_i)=f_Z'(Z_i)/ f_Z(Z_i) + \o_p(1)$ holds not only in the mean but also in the mean square, since the exception probability are smaller than $\sig_n^2$.

\vspace{3ex}
{\bf \Large References}

\begin{list}{}{\setlength{\itemindent}{-1em}\setlength{\itemsep}{0.5em}}
\item
Brockwell, P.J. and Davis, R.A. (1991). Time Series: Theory and Methods.
Second edition. Springer.f
\item
Brown, L. D. (1971). Admissible estimators, recurrent diffusions, and insoluble boundary
value problems. {\it Ann.Math.Stat.} {\bf 42}, 855-904.
\item
Brown, L.D., Cai, T., Zhang, R., Zhao, L., Zhou, H. (2010). The
root-unroot algorithm for density
estimation as implemented via wavelet block thresholding. {\it Probability and Related Fields}, {\bf 146}, 401-433.
\item
Brown, L.D. and Greenshtein, E. (2009). Non parametric
empirical Bayes and compound decision
approaches to estimation of high dimensional vector of normal
means. {\it Ann. Stat.} {\bf 37}, No 4, 1685-1704.
\item
Brown L.D., Greenshtein, E. and Ritov, Y. (2013). The Poisson compound decision revisited. {\it JASA.} {\bf 108} 741-749.
\item
Cohen, N., Greenshtein E., and Ritov, Y. (2012). Empirical Bayes in the presence of explanatory
variables.  {\it Statistica Sinica.} {\bf 23}, No. 1, 333-357.
\item
Copas, J.B. (1969). Compound decisions  and empirical Bayes (with discussion). {\it JRSSB} {\bf 31} 397-425.
\item
Fay, R.E. and Herriot, R. (1979). Estimates of income for small places: An application of James-Stein
procedure to census data. {\it JASA}, {\bf 74}, No. 366, 269-277.
\item
Greenshtein, E. and Ritov, Y. (2008). Asymptotic efficiency of
simple
decisions
for the compound decision problem. {The 3'rd
Lehmann Symposium. IMS Lecture
Notes Monograph Series}, J.Rojo, editor. 266-275.
\item
Jiang, W. and Zhang, C.-H. (2010). Empirical Bayes in-season prediction of baseball batting average. {\it Borrowing Strength: Theory Powering Application-A festschrift for L.D. Brown}
J.O. Berger, T.T. Cai, I.M. Johnstone, eds. IMS collections  {\bf 6}, 263-273.
\item
Koenker, R. and Mizera, I. (2012). Shape constraints , compound decisions and empirical Bayes rules. Manuscript.
\item
Robbins, H. (1951). Asymptotically subminimax solutions  of
compound decision problems. {\it Proc. Second Berkeley Symp.}
131-148.
\item
Robbins, H. (1955). An Empirical Bayes approach to statistics.
{\it  Proc. Third Berkeley Symp.} 157-164.
\item
Robbins, H. (1964). The empirical Bayes approach to statistical
decision problems. {\it Ann.Math.Stat.}
{\bf 35}, 1-20.
\item
Samuel, E. (1965). Sequential Compound Estimators. {\it Ann.Math. Stat.}
{\bf 36}, No 3, 879-889.
\item
Zhang, C.-H.(2003). Compound decision theory and empirical
Bayes methods.(invited paper). {\it Ann. Stat.} {\bf 31}
379-390.

\end{list}

\end{document}